\newcounter{stmcounter}[section]
\newcounter{thcounter}
\newcounter{problcounter}
\numberwithin{equation}{section}
\theoremstyle{plain}
\newtheorem{cor}[stmcounter]{Corollary}
\newtheorem{thm}[thcounter]{Theorem}
\newtheorem{prop}[stmcounter]{Proposition}
\newtheorem{probl}[problcounter]{Problem}
\theoremstyle{definition}
\theoremstyle{remark}
\newtheorem{ex}[stmcounter]{Example}
\newtheorem{rem}[stmcounter]{Remark}
\DeclareMathOperator{\Sp}{Sp}
\DeclareMathOperator{\Cone}{Cone}
\DeclareMathOperator{\conj}{conj}
\newcommand{\simc}{\!\!\sim}
\newcommand{\dd}{\partial}
\newcommand{\xb}{\mathbf{x}}
\newcommand{\tb}{\mathbf{t}}
\newcommand{\yb}{\mathbf{y}}
\newcommand{\hh}{\tilde{h}}
\newcommand{\Zo}{\mathbb{Z}}
\newcommand{\Ro}{\mathbb{R}}
\newcommand{\Rg}{\mathbb{R}_{\geqslant 0}}
\newcommand{\Co}{\mathbb{C}}
\newcommand{\Ho}{\mathbb{H}}
\newcommand{\Zt}{\Zo_2}
\newcommand{\Ko}{\mathbb{K}}
\newcommand{\CP}{\mathbb{C}P}
\newcommand{\HP}{\mathbb{H}P}
\newcommand{\OP}{\mathbb{O}P}
\begin{document}

\title[On actions of tori and quaternionic tori]{On actions of tori and quaternionic tori on products of spheres}

\author[A.\,A.\,Ayzenberg]{Anton Ayzenberg}
\address{Neapolis University, Pafos, Cyprus}
\email{ayzenberga@gmail.com}
\author[D.\,V.\,Gugnin]{Dmitry Gugnin}
\address{Steklov Mathematical Institute, Moscow, Russia}
\email{dmitry-gugnin@yandex.ru}

\date{\today}

\keywords{torus action, quaternions, orbit spaces}

\dedicatory{To Victor Buchstaber, \\ our great teacher and the wonderful person}

\subjclass[2020]{Primary: 57S12, 57S15, 57S25, Secondary: 57R10}

\thanks{This work was supported by the Russian Science Foundation under grant no. 23-11-00143 \href{https://rscf.ru/en/project/23-11-00143/}{https://rscf.ru/en/project/23-11-00143/}
}

\begin{abstract}
In this paper we study the actions of tori (standard compact tori, as well as their quaternionic analogues) on products of spheres. It is proved that the orbit space of a specific action of a torus on a product of spheres is homeomorphic to a sphere. A similar statement for a real torus $\mathbb{Z}_2^n$ was proved by the second author in 2019. We also provide a statement about arbitrary compact topological groups, generalizing the mentioned results, as well as the results of the first author about the actions of a compact torus of complexity one.
\end{abstract}

\maketitle

\section{Introduction}\label{secIntro}

Let $S^m = \{(x_1,\ldots, x_m,x_{m+1})\in \Ro^{m+1} \mid x_1^2+\ldots+ x_m^2 + x_{m+1}^2 = 1\}$ be a standard sphere. Let $\tau\colon S^m \to S^m$ denote one of standard involutions, namely $\tau(x_1,\ldots,x_m,x_{m+1}) = (x_1,\ldots,x_m, - x_{m+1})$. Consider the product of $k$ spheres of arbitrary positive dimensions:
\[
S^{m_1}\times S^{m_2}\times \ldots \times S^{m_k}, \quad k\geqslant 2.
\]
Commuting involutions $\tau_1, \ldots, \tau_k$ act on this manifold (the action is component-wise, each of the involutions is a permutation of the north and south poles of the corresponding sphere). Since involutions commute, we have a $C^{\omega}$-action of the group $\Zt^{k}$ on the product of spheres under consideration. The group $\Zt^{k}$ contains a subgroup $G_k$ of index 2, consisting of orientation-preserving elements.

The following result was obtained by the second author~\cite{Gug19}.

\begin{thm}[\cite{Gug19}]\label{thmReal}
The quotient space $S^{m_1}\times S^{m_2}\times \ldots \times S^{m_k}/G_k$ is homeomorphic to the sphere $S^m$, $m = m_1+\cdots + m_k$. In this case, the canonical projection onto the space of orbits is given by the following formula:
\begin{multline}
(x_{1,1}, \ldots, x_{m_1,1}, x_{m_{1} +1,1}, \ldots, x_{1,k}, \ldots, x_{m_k,k}, x_{m_k +1,k}) \mapsto\\ \mapsto
\frac{(x_{1,1}, \ldots, x_{m_1,1}, \ldots, x_{1,k}, \ldots, x_{m_k,k} ; x_{m_{1} +1,1} \cdots x_{m_k +1,k})}{\sqrt{ x_{1,1}^2 +  \ldots + x_{m_1,1}^2 + \ldots + x_{1,k}^2 + \ldots + x_{m_k,k}^2 + x_{m_{1} +1,1}^2 \cdots x_{m_k +1,k}^2 }}
\end{multline}
Moreover, the resulting branched covering $S^{m_1}\times S^{m_2}\times \ldots \times S^{m_k} \to S^m$ is globally $C^{\omega}$-smooth and nondegenerate at the points of the local homeomorphism.
\end{thm}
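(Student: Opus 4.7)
The plan is to work with the unnormalized map $\Phi_0: \prod_{i=1}^k S^{m_i} \to \Ro^{m+1}$, $x \mapsto ((x_{j,i})_{j\leq m_i,\, i=1,\ldots,k};\; \prod_{i=1}^k y_i)$, where $y_i := x_{m_i+1,i}$ denotes the pole coordinate of the $i$-th factor, and to let $\Phi := \Phi_0/\|\Phi_0\|$ be the normalized map of the displayed formula. Using $\sum_{j=1}^{m_i} x_{j,i}^2 = 1 - y_i^2$, the squared norm of $\Phi_0$ rewrites as $k - \sum_i y_i^2 + \prod_i y_i^2$. As a function of $y_i^2 \in [0,1]$ this is non-increasing in each variable and attains its minimum value $1$ at $y_i^2 \equiv 1$, so $\|\Phi_0\| \geq 1$ and $\Phi$ is globally defined, real-analytic, and valued in $S^m$. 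Each $\tau_i$ flips only $y_i$, so $\prod_i y_i$ is $G_k$-invariant, and $\Phi$ factors through a continuous map $\bar\Phi : (\prod_i S^{m_i})/G_k \to S^m$.

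By compactness of the domain and Hausdorffness of $S^m$, it suffices to prove $\bar\Phi$ is a bijection. For surjectivity, given $(z;w) \in S^m$, I would seek a preimage of the form $x_{j,i} = \lambda z_{j,i}$ with $\lambda > 0$; the sphere identities then fix $y_i^2 = 1 - \lambda^2 a_i$ with $a_i := \sum_j z_{j,i}^2$, and a direct check shows $\bar\Phi(x) = (z;w)$ is equivalent to $\prod_i y_i = \lambda w$. Squaring turns this into $\prod_i(1 - \lambda^2 a_i) = \lambda^2 w^2$; the left side equals $1$ at $\lambda = 0$ and $0$ at $\lambda = (\max_i a_i)^{-1/2}$, while the right side is non-negative, so the intermediate value theorem produces a valid $\lambda$ in this range, and the $y_i$-signs can then be adjusted within one $G_k$-orbit to match the sign of $w$.

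Injectivity is the main technical obstacle. If $\bar\Phi[x] = \bar\Phi[x']$, then $\Phi_0(x) = t\,\Phi_0(x')$ for some $t > 0$, giving $x_{j,i} = t\,x'_{j,i}$, $\prod_i y_i = t \prod_i y'_i$, and, via the sphere identities, $1 - y_i^2 = t^2(1 - y_i'^2)$ for each $i$. The key claim is $t = 1$, which I would prove by contradiction. If $t > 1$, no $y_i'$ can vanish (else $y_i^2 = 1 - t^2 < 0$), so all $y_i' \neq 0$ and squaring the product relation yields $\prod_i y_i^2 = t^2 \prod_i y_i'^2 > \prod_i y_i'^2$. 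On the other hand, indices with $y_i'^2 = 1$ satisfy $y_i^2 = 1$, while indices with $y_i'^2 < 1$ satisfy $y_i^2 < y_i'^2$ strictly, and at least one index of the latter type must exist (otherwise the product relation already forces $t = 1$), forcing $\prod_i y_i^2 < \prod_i y_i'^2$, a contradiction. The case $t < 1$ is symmetric, with $y_i' = 0$ ruled out via the product equation. With $t = 1$ one gets $x_{j,i} = x_{j,i}'$ and $y_i = \pm y_i'$ with product of signs equal to $+1$, i.e.\ $x' \in G_k \cdot x$.

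Finally, the real-analyticity of $\Phi$ follows from the explicit rational formula together with the lower bound $\|\Phi_0\| \geq 1$, and non-degeneracy at points where all $y_i \neq 0$ (the complement of the branch locus) reduces to a Jacobian calculation in local charts on each $S^{m_i}$ centered away from the pole. Overall, the most delicate step I anticipate is the injectivity argument, specifically controlling the boundary cases $y_i' \in \{0, \pm 1\}$ within the strict-inequality chain.
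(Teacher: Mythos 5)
Your topological argument is essentially sound: the unnormalized map, the $G_k$-invariance of $\prod_i y_i$, reduction of the homeomorphism claim to bijectivity via compactness, the scaling relation $\Phi_0(x)=t\,\Phi_0(x')$ with the strict-monotonicity contradiction for $t\neq 1$, and the intermediate value theorem for surjectivity --- this is the same route the paper takes in its proof of the quaternionic analogue (Theorem~\ref{thmQuaternionic}), of which the topological part of Theorem~\ref{thmReal} is the real special case. One small repair: when $t=1$ and some $y_{i_0}=y_{i_0}'=0$, the ``product of signs equal to $+1$'' is not literally forced; instead one uses the vanishing coordinate as a free flip to make the number of sign changes even (this is exactly the case analysis the paper carries out with $q_k=p_k=0$, etc.). You flagged this yourself, and it is easily fixed.

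The genuine gap is in your final paragraph, and it concerns precisely the part of the statement that the paper considers the nontrivial one and devotes its entire written proof to: nondegeneracy of the branched covering \emph{at the points of the local homeomorphism}. You assert nondegeneracy only ``at points where all $y_i\neq 0$ (the complement of the branch locus)'', but this misidentifies that set. The stabilizer of a point under $G_k$ (products of an \emph{even} number of the $\tau_i$) is trivial whenever at most one coordinate $y_i$ vanishes, so the quotient map is a local homeomorphism not only where $y_1\cdots y_k\neq 0$ (the paper's case $(A)$), but also where exactly one $y_j=0$ (the paper's case $(B)$); the true branch locus is where two or more of the $y_i$ vanish. Case $(B)$ is the hard case: there the invariant product $y_1\cdots y_k$ vanishes, the image point lies on the equator of $S^m$, and the straightforward chart computation that works in case $(A)$ (column operations producing a triangular Jacobian with nonzero diagonal) breaks down. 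The paper handles it by choosing coordinates $q_1, x_{2,1},\ldots,x_{m_1,1},\ldots,\mu$ at a point with $q_1=0$, $x_{1,1}\neq 0$, and reducing the nonvanishing of the Jacobian to the invertibility of an $m_1\times m_1$ matrix $A$ whose shift $A-x_{1,1}E$ is skew-symmetric; since a real skew-symmetric matrix has no nonzero real eigenvalues, $-x_{1,1}$ is not an eigenvalue, so $\det A\neq 0$. Nothing in your proposal addresses these points; ``reduces to a Jacobian calculation in local charts'' is exactly the step that requires a genuine argument, and at case-$(B)$ points that argument is the crux of the paper's proof.
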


For the sake of convenience, we will call the group $\Zt^n=O(1)^n$ a real torus. Similarly, we have a complex\footnote{The adjective ``complex'' in this context refers to the fact that $T^1=U(1)$ is the set of complex numbers of length $1$. We do not assume any complex structures on complex tori. In this naming convention we are coherent with Arnold's mathematical ideology, where the ``real--complex--quaternionic'' trinity plays a crucial role.} torus $T^n=U(1)^n$, and a quaternionic torus $\Sp(1)^n$.

In this paper, we formulate and prove a natural generalization of the above theorem to the actions of complex and quaternionic tori; this is done in Section~\ref{secMainResults}. More generally, the result can be formulated for an arbitrary compact topological group; this is the subject of Section~\ref{secGeneralJoins}. From the general result we deduce that the quotient of some specific action of the quaternionic torus $\Sp(1)^{k-1}$ on the space $\Ho^k\cong \Ro^{4k}$ is homeomorphic to the space $\Ro^{k+3}$, see Proposition~\ref{propLocal}. This assertion is a quaternionic generalization of the previous result of the first author~\cite{AyzCompl} about toric actions of complexity one in general position.

\section{Main results}\label{secMainResults}

Consider $k\geqslant 2$ many spheres of dimensions at least $2$:
\begin{gather*}
S^{m_1} = \{ (\xb_1, z_1)\mid \xb_1 = (x_{1,1}, x_{2,1}, \ldots, x_{m_1-1,1}) \in \Ro^{m_1-1}, z_1\in \Co, |\xb_1|^2 + |z_1|^2 =1 \},\\
\vdots\\
S^{m_k} = \{ (\xb_k, z_k)\mid \xb_k = (x_{1,k}, x_{2,k}, \ldots, x_{m_k-1,k}) \in \Ro^{m_k-1}, z_k\in \Co, |\xb_k|^2 + |z_k|^2 =1 \}.
\end{gather*}
Consider the direct product $S^{m_1}\times S^{m_2}\times \ldots \times S^{m_k}$. It carries a (left) smooth action of the complex torus $T^{k-1}$. Namely, the element $(r_1,r_2,\ldots, r_{k-1})\in T^{k-1}$ translates a point
\[
((\xb_1, z_1), (\xb_2, z_2), \ldots, (\xb_k, z_k)) \in S^{m_1}\times S^{m_2}\times \ldots \times S^{m_k}
\]
to the point
\[
((\xb_1, z_1r_1^{-1}), (\xb_2, r_1z_2r_2^{-1}), (\xb_3, r_2z_3r_3^{-1}), \ldots, (\xb_k, r_{k-1}z_k)).
\]

\begin{thm}\label{thmComplex}
The quotient space $S^{m_1}\times S^{m_2}\times \ldots \times S^{m_k}/T^{k-1}$ is homeomorphic to the sphere $S^m, m = m_1+\ldots + m_k - (k-1)$. The canonical projection to the orbit space is given by the formula:
\begin{equation}\label{eqStar}
((\xb_1, z_1), (\xb_2, z_2), \ldots, (\xb_k, z_k)) \mapsto \frac{(\xb_1, \xb_2, \ldots, \xb_k, z_1z_2\ldots z_k)}{\sqrt{ |\xb_1|^2 + |\xb_2|^2 +  \ldots +  |\xb_k|^2 + |z_1z_2\ldots z_k|^2 }}
\end{equation}
\end{thm}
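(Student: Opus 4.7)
The plan is to verify that formula (\ref{eqStar}) defines a continuous $T^{k-1}$-invariant surjection $\pi\colon\prod_i S^{m_i}\to S^m$ whose fibres coincide with the $T^{k-1}$-orbits; since the source is compact and $S^m$ is Hausdorff, the induced map $\bar\pi$ on the quotient will then automatically be a homeomorphism. Well-definedness is immediate: under the action the product $z_1z_2\cdots z_k$ telescopes as $(z_1r_1^{-1})(r_1z_2r_2^{-1})\cdots(r_{k-1}z_k) = z_1z_2\cdots z_k$, so the numerator of (\ref{eqStar}) is $T^{k-1}$-invariant; the denominator $D:=\sqrt{\sum_i|\xb_i|^2+|z_1\cdots z_k|^2}$ never vanishes, because all $\xb_i=0$ would force $|z_i|=1$ for each $i$ and the last term to equal $1$; finally the numerator has real dimension $\sum_i(m_i-1)+2 = m+1$, so after normalization the image lies on $S^m$.

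\textbf{Surjectivity.} Fix $\yb=(\yb_1,\ldots,\yb_k,w)\in S^m$ and write $a_i:=|\yb_i|^2$, so $\sum_i a_i+|w|^2=1$. A preimage must satisfy $\xb_i=D\yb_i$, $z_1\cdots z_k=Dw$ and $|z_i|^2=1-D^2a_i$; imposing $\prod_i|z_i|^2=D^2|w|^2$ reduces everything to the scalar equation
\[
F(t)\,:=\,\prod_{i=1}^k(1-ta_i)\,-\,t|w|^2\,=\,0,\qquad t\in[0,T],\quad T:=1/\max_i a_i.
\]
Here $F(0)=1$ and $F(T)=-T|w|^2\leq 0$, while the Weierstrass-type inequality $\prod_i(1-a_i)\geq 1-\sum_i a_i=|w|^2$ (proved by induction) gives $F(1)\geq 0$. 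Since $F$ is strictly decreasing on $[0,T)$, there is a unique root $t_*\in[1,T]$; taking $D=\sqrt{t_*}$ fixes the moduli $|z_i|$, and any arguments with $\sum_i\arg z_i\equiv\arg w\pmod{2\pi}$ produce a lift.

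\textbf{Injectivity and fibre structure.} If two tuples share the same image, uniqueness of $t_*$ forces identical $D$, hence identical $\xb_i=D\yb_i$, identical moduli $|z_i|$, and identical products $z_1\cdots z_k=Dw$. To conclude that two such tuples are related by $T^{k-1}$, observe that when every $z_i\neq 0$ the linear map
\[
(\psi_1,\ldots,\psi_{k-1})\;\mapsto\;(-\psi_1,\,\psi_1-\psi_2,\,\ldots,\,\psi_{k-2}-\psi_{k-1},\,\psi_{k-1})
\]
is a bijection from $\Ro^{k-1}$ onto the hyperplane $\{\delta\in\Ro^k:\sum_i\delta_i=0\}$, so a unique $(r_1,\ldots,r_{k-1})\in T^{k-1}$ realizes the required phase-difference vector. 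If some $z_j=0$, then $z_j'=0$ as well (moduli coincide), and the same argument applied to the two subchains flanking $j$ still acts transitively on the remaining arguments, albeit with a nontrivial stabilizer. Thus $\bar\pi$ is a continuous bijection of compact Hausdorff spaces, hence a homeomorphism.

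\textbf{Where the work sits.} The analytical heart is the uniqueness of $t_*$, which hinges on the strict monotonicity of $F$ together with the Weierstrass-type bound ensuring $F(1)\geq 0$; once this is in place, everything else is combinatorial linear algebra on phase tori. The main obstacle I expect is the boundary stratum $w=0$, where $t_*=T$ and every $z_j$ with $a_j=\max_i a_i$ vanishes simultaneously, so the orbit degenerates and the stabilizer jumps; the case analysis for this stratum is elementary but must be carried out explicitly to confirm that the orbit-to-fibre correspondence remains bijective everywhere.
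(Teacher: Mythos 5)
Your proof is correct, but it is organized differently from the paper's. The paper (which proves the quaternionic case in detail and notes the complex case is identical) proceeds in two stages: it first shows that the \emph{unnormalized} map $((\xb_i,z_i))_i \mapsto (\xb_1,\ldots,\xb_k,z_1\cdots z_k)$ already separates orbits --- using the group action to rotate each $z_i$, $i<k$, to a nonnegative real and then a case analysis on vanishing coordinates --- and only afterwards proves that the normalized map is a bijection onto $S^m$: injectivity by showing no two image points can be proportional with ratio $\mu>1$, and surjectivity by contradiction (the image is compact, so if it were proper one could pick a missed point with all coordinates nonzero and run an intermediate-value argument in the radial parameter $\mu$). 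You instead analyze the normalized map directly and compress both the injectivity and the surjectivity of the radial part into one scalar equation $F(t)=\prod_{i}(1-ta_i)-t|w|^2=0$ on $[0,T]$: strict monotonicity gives uniqueness of the root (hence equality of the normalizing factors $D$, hence of all moduli), and the sign change $F(0)=1>0\geq F(T)$ gives existence --- note that your Weierstrass-type bound $F(1)\geq 0$ is actually superfluous, since existence only requires a root in $[0,T]$, not in $[1,T]$. What your route buys: it is constructive and uniform over all strata (the degenerate locus $w=0$ and the edge case where all $a_i=0$, so that $T$ is undefined, need only trivial checks, which you correctly flag; your flanking-subchain argument for fibers with some $z_j=0$ does go through), and it avoids the paper's density/contradiction step in surjectivity. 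What the paper's route buys: its orbit-separation argument uses only one-sided multiplications and never adds phases, so it transfers verbatim to the noncommutative quaternionic torus $\Sp(1)^{k-1}$, which is the paper's main theorem; your additive-phase argument (the isomorphism of $T^{k-1}$ onto the subtorus $\{\,\prod_i s_i=1\,\}\subset T^k$) is specific to the abelian case and would have to be replaced by an ordered chain-solving argument there.
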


The proof of this theorem is completely analogous to the proof of its quaternionic version, Theorem~\ref{thmQuaternionic} below. Let $\Ho$ denote the algebra of quaternions and $\Sp(1)$ --- the Lie group of unit quaternions (i.e. the quaternions of unit length). Consider $k\geqslant 2$ many spheres of dimensions at least 4:
\begin{gather*}
S^{m_1} = \{ (\xb_1, q_1)\mid \xb_1 = (x_{1,1}, x_{2,1}, \ldots, x_{m_1-3,1}) \in \Ro^{m_1-3}, q_1\in \Ho, |\xb_1|^2 + |q_1|^2 =1 \}, \\
\vdots \\
S^{m_k} = \{ (\xb_k, q_k)\mid \xb_k = (x_{1,k}, x_{2,k}, \ldots, x_{m_k-3,k}) \in \Ro^{m_k-3}, q_k\in \Ho, |\xb_k|^2 + |q_k|^2 =1 \}.
\end{gather*}

Consider the direct product $S^{m_1}\times S^{m_2}\times \ldots \times S^{m_k}$. It carries a (left) smooth action of the quaternionic torus of rank $k-1$
\[
\Sp(1)^{k-1} = \underbrace{\Sp(1)\times \Sp(1)\times \ldots \times \Sp(1)}_{k-1 \text{ times}}.
\]
Namely, the element $(r_1,r_2,\ldots, r_{k-1})\in \Sp(1)^{k-1}$ translates a point
\[
((\xb_1, q_1), (\xb_2, q_2), \ldots, (\xb_k, q_k)) \in S^{m_1}\times S^{m_2}\times \ldots \times S^{m_k}
\]
to the point
\[
((\xb_1, q_1r_1^{-1}), (\xb_2, r_1q_2r_2^{-1}), (\xb_3, r_2q_3r_3^{-1}), \ldots, (\xb_k, r_{k-1}q_k)).
\]

\begin{thm}\label{thmQuaternionic}
The quotient space $S^{m_1}\times S^{m_2}\times \ldots \times S^{m_k}/\Sp(1)^{k-1}$ is homeomorphic to the sphere $S^m, m = m_1+\ldots + m_k - 3(k-1)$. The canonical projection to the orbit space is given by the formula:
\begin{equation}\label{eqTwoStars}
((\xb_1, q_1), (\xb_2, q_2), \ldots, (\xb_k, q_k)) \mapsto \frac{(\xb_1, \xb_2, \ldots, \xb_k, q_1q_2\ldots q_k)}{\sqrt{ |\xb_1|^2 + |\xb_2|^2 +  \ldots +  |\xb_k|^2 + |q_1q_2\ldots q_k|^2 }}.
\end{equation}
\end{thm}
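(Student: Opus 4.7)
Let $f$ denote the map displayed in~\eqref{eqTwoStars}. The first step is to check that $f$ is well defined: its denominator is strictly positive, for otherwise $\xb_j=0$ for every $j$, forcing each $|q_j|=1$ and hence $|q_1\cdots q_k|=1\neq 0$, a contradiction. Normalization then places the image on the unit sphere in $\Ro^{(m_1-3)+\cdots+(m_k-3)}\times\Ho\cong\Ro^{m+1}$, where $m=\sum m_j-3(k-1)$. Invariance under $\Sp(1)^{k-1}$ is immediate from the telescoping identity
$$(q_1r_1^{-1})(r_1q_2r_2^{-1})\cdots(r_{k-2}q_{k-1}r_{k-1}^{-1})(r_{k-1}q_k)=q_1q_2\cdots q_k$$
together with the obvious fact that the action leaves every $\xb_j$ untouched. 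Hence $f$ descends to a continuous map $\bar f\colon (S^{m_1}\times\cdots\times S^{m_k})/\Sp(1)^{k-1}\to S^m$.

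\emph{Surjectivity.} Given a target point $(\yb_1,\ldots,\yb_k,w)\in S^m$ I would seek a preimage of the form $\xb_j=\lambda\yb_j$ and $q_1\cdots q_k=\lambda w$ for some $\lambda>0$. The sphere conditions $|\xb_j|^2+|q_j|^2=1$ prescribe $|q_j|^2=1-\lambda^2|\yb_j|^2$, and taking moduli in the product equation produces the scalar compatibility
$$\lambda^2|w|^2=\prod_{j=1}^k\bigl(1-\lambda^2|\yb_j|^2\bigr).$$
At $\lambda=0$ the right side equals $1$ while the left is $0$, and at $\lambda^2=1/\max_j|\yb_j|^2$ the right side vanishes while the left is nonnegative, so by the intermediate value theorem an admissible $\lambda$ exists. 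With such a $\lambda$ in hand one may take $q_1,\ldots,q_{k-1}$ to be the real nonnegative numbers of the required moduli and set $q_k=(q_1\cdots q_{k-1})^{-1}\lambda w$, whose modulus is automatically correct. Degenerate cases ($w=0$, or some $|\yb_j|$ saturating the bound) are handled by straightforward direct constructions.

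\emph{Injectivity.} Suppose $((\xb_j,q_j))$ and $((\xb_j',q_j'))$ have the same image. Reading off coordinates gives $\xb_j=\xb_j'$ for all $j$, hence $|q_j|=|q_j'|$, and also $q_1\cdots q_k=q_1'\cdots q_k'$. If every $q_j$ is nonzero, I would solve the orbit equations sequentially: $r_1=(q_1')^{-1}q_1$, and then $r_i=(q_i')^{-1}r_{i-1}q_i$ for $2\leqslant i\leqslant k-1$; each $r_i$ lies in $\Sp(1)$ because $|q_i|=|q_i'|$, and the remaining equation $r_{k-1}q_k=q_k'$ collapses, after substitution, to the hypothesis $q_1\cdots q_k=q_1'\cdots q_k'$. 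When some $q_{j_0}$ vanishes, necessarily $q_{j_0}'=0$ as well; the equation at position $j_0$ becomes automatic, the system decouples into maximal blocks of consecutive nonzero positions, and each block can be solved independently because at least one generator $r$ adjacent to the block is unconstrained by the neighbouring trivial equation and can absorb any residual quaternionic factor.

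Finally, $(S^{m_1}\times\cdots\times S^{m_k})/\Sp(1)^{k-1}$ is compact as a quotient of a compact space, and $S^m$ is Hausdorff, so the continuous bijection $\bar f$ is automatically a homeomorphism. I expect the principal technical obstacle to lie in the injectivity step at configurations with vanishing quaternion coordinates: the clean sequential construction of the $r_j$ stalls there, and one must carefully exploit the extra freedom provided by the trivial $\Sp(1)$-action at zero positions to complete the orbit matching.
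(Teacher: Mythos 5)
Your well-definedness, invariance, and surjectivity arguments are sound, and the endgame (continuous bijection from a compact space to a Hausdorff space is a homeomorphism) is the same as the paper's. In fact your surjectivity, via the scalar equation $\lambda^2|w|^2=\prod_{j}(1-\lambda^2|\yb_j|^2)$ and the intermediate value theorem, is essentially the paper's argument in a cleaner form. Likewise, your block-decoupling analysis for matching orbits when some $q_{j_0}=0$ is correct; the paper does the same job by first rotating $q_1,\ldots,q_{k-1}$ to be real nonnegative within the orbit and then exhibiting explicit group elements of the form $(1,\ldots,1,a,\ldots,a)$.

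However, there is a genuine gap, and it is not where you predicted. The first line of your injectivity step --- ``Reading off coordinates gives $\xb_j=\xb_j'$'' --- is unjustified. The map~\eqref{eqTwoStars} is normalized, so equality of images only yields \emph{proportionality} of the unnormalized vectors:
\[
(\xb_1,\ldots,\xb_k,q_1\cdots q_k)=\mu\,(\xb_1',\ldots,\xb_k',q_1'\cdots q_k'),\qquad \mu=A/A'>0,
\]
where $A,A'$ are the two denominators; these are non-constant functions on the product of spheres, so $\mu=1$ cannot simply be read off. Ruling out $\mu\neq 1$ is exactly what the paper's ``Injectivity'' section does, and it requires a real argument: assuming $\mu>1$, either both quaternion products vanish, in which case some $q_{i_0}'=0$ forces $|\xb_{i_0}'|=1$ and hence $|\xb_{i_0}|=\mu>1$, impossible on a sphere; or all $q_i,q_i'$ are nonzero, in which case $|\xb_i|=\mu|\xb_i'|\geqslant|\xb_i'|$ together with the sphere equations gives $|q_i|\leqslant|q_i'|$ for every $i$, hence $|q_1\cdots q_k|\leqslant|q_1'\cdots q_k'|$, contradicting $|q_1\cdots q_k|=\mu|q_1'\cdots q_k'|>|q_1'\cdots q_k'|$. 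You must insert this (or an equivalent) monotonicity argument before your orbit-matching step; once $\mu=1$ is established, the rest of your injectivity argument goes through and the proof is complete. Note also that your closing self-diagnosis points at the wrong place: the vanishing-coordinate configurations are handled fine by your block argument, whereas the scale factor $\mu$ is the step your proposal actually misses.
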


\begin{proof}
First, let us prove that the canonical projection onto the space of orbits is realized by a simpler formula:
\begin{equation}\label{eq1}
((\xb_1, q_1), (\xb_2, q_2), \ldots, (\xb_k, q_k)) \mapsto (\xb_1, \xb_2, \ldots, \xb_k, q_1q_2\ldots q_k).
\end{equation}
From the definition of the action of the quaternionic torus on the product of spheres, it is clear that any orbit gets mapped to a single point under the map~\eqref{eq1}. Let us prove that two different orbits cannot map to the same point under~\eqref{eq1}. Indeed, assume
\begin{equation}\label{eq2}
(\xb_1, \xb_2, \ldots, \xb_k, q_1q_2\ldots q_k) = (\yb_1, \yb_2, \ldots, \yb_k, p_1p_2\ldots p_k).
\end{equation}
It follows that $\xb_i = \yb_i, 1\leqslant i\leqslant k$. Therefore, $|q_i|=|p_i|$, $1\leqslant i\leqslant k$. It is easily seen that, remaining within a single orbit, we can assume that $0\leqslant q_i=p_i\leqslant 1$, $1\leqslant i\leqslant k-1$.

At first, let us assume that $q_1q_2\ldots q_k = p_1p_2\ldots p_k \neq 0$. Then, obviously, $q_k=p_k$, which was to be proved. Now consider the case $q_1q_2\ldots q_k = p_1p_2\ldots p_k = 0$. If $q_k = p_k = 0$, then everything is proved. Let $|q_k| = |p_k| > 0$. Denote the unit quaternion $p_kq_k^{-1}$ by $a$.

Let $q_{k-1}=p_{k-1}=0$. Then one can translate the tuple $(q_1,q_2, \ldots, 0, q_k)$ to the tuple $(p_1,p_2,\ldots, 0, p_k)$ using the element $(1,1,\ldots, 1, r_{k-1}= a)\in \Sp(1)^{k-1}$. If $q_{k-1} = p_{k-1} > 0$, $q_{k-2} = p_{k-2} = 0$, then $(1,1,\ldots,1,a,a)$ is the required element of the group $\Sp(1)^{k-1}$. If $q_{k-1} = p_{k-1} > 0$, $q_{k-2} = p_{k-2} > 0$, $q_{k-3} = p_{k-3} = 0$, then the required element is $(1,1,\ldots,1,a,a,a)$. Similar arguments work for other cases. In the most extreme case, we have $q_{k-1} = p_{k-1} > 0$, $q_{k-2} = p_{k-2} > 0$, $\ldots$, $q_2 = p_2 > 0$, $q_1 = p_1 = 0$. In this case, the required element is $(a,a,\ldots,a)$. Thus, we have proved that formula~\eqref{eq1} determines a well-defined canonical projection onto the orbit space.

It can be seen that the vector on the right hand side of formula~\eqref{eq1} is nonzero. Since the formula~\eqref{eqTwoStars} on the left hand side involves a compact Hausdorff space (the quotient of a Hausdorff space by a continuous action of a compact Lie group is Hausdorff~\cite{Bredon-ru}), and the sphere appears on the right hand side, it suffices to check {\it bijectivity} of the map~\eqref{eqTwoStars}.

\textbf{Injectivity}. We need to check that the equality
\begin{equation}\label{eq3}
(\xb_1, \xb_2, \ldots, \xb_k, q_1q_2\ldots q_k) = \mu (\yb_1, \yb_2, \ldots, \yb_k, p_1p_2\ldots p_k),\quad \mu>1
\end{equation}
never occurs for distinct points. We have $q_1q_2\ldots q_k = \mu p_1p_2\ldots p_k$. Two cases are possible: (A) $q_1q_2\ldots q_k = p_1p_2\ldots p_k = 0$, and (B) $q_i\neq 0, p_i\neq 0, 1\leqslant i \leqslant k$.

\textit{Case (A).} We have $p_{i_0} = 0$ for some $1\leqslant i_0\leqslant k$. Then $|\yb_{i_0}| = 1$ and $|\xb_{i_0}| = \mu |\yb_{i_0}| = \mu > 1$, which is impossible.
\textit{Case (B).} We have $|\xb_i| = \mu |\yb_i| \geqslant |\yb_i|$, $1\leqslant i\leqslant k$. This implies that $0 < |q_i|\leqslant |p_i|$, $1\leqslant i\leqslant k$. Hence $0 < |q_1q_2\ldots q_k| \leqslant |p_1p_2\ldots p_k|$. On the other hand, we have $|q_1q_2\ldots q_k| = \mu |p_1p_2\ldots p_k| > |p_1p_2\ldots p_k|$, --- a contradiction.

\textbf{Surjectivity}. Since the image of a compact space under a continuous mapping is always a compact space, then either the surjectivity is proved or the image of the map~\eqref{eqTwoStars} is a proper subcompact in the sphere $S^m$. Again, we argue from the contrary. Assume there exists a vector $(\tb_1, \tb_2, \ldots, \tb_k, t)$ such that $\tb_i \neq 0$, $1\leqslant i\leqslant k$, $t\neq 0$, and for any $\mu > 0$ the vector $\mu(\tb_1, \tb_2, \ldots, \tb_k, t)$ does not have the form $(\xb_1, \xb_2, \ldots, \xb_k, q_1q_2\ldots q_k)$. Denote $\min_{1\leqslant i\leqslant k}\{1 /|\tb_i|\}$ by $\mu_0$.

As the parameter $\mu$ runs over the interval $(0,\mu_0)$, the lengths of the vectors $\mu \tb_i = \xb_i$, $1\leqslant i\leqslant k$ increase strictly and continuously and run over the intervals $(0, \mu_0 |\tb_i|) \subset (0,1)$, $1\leqslant i\leqslant k$. Moreover, there exists $1\leqslant i_0\leqslant k$ such that $(0, \mu_0| \tb_{i_0} |) = (0,1)$. It follows from the length expressions $|q_i| = \sqrt{1- |\xb_i|^2}$, that the length of $|q_1(\mu)q_2(\mu)\ldots q_k(\mu)|$ decreases strictly and continuously from $1$ to $0$ (not taking extreme values). In this case, it is possible to achieve collinearity of the nonzero quaternions $t$ and $q_1(\mu)q_2(\mu)\ldots q_k(\mu)$, $0< \mu < \mu_0$. Since the length of $|\mu t|$ increases strictly and continuously from $0$ to $|\mu_0||t|>0$, the Cauchy intermediate value theorem asserts that there exists a parameter $\mu_1\in (0, \mu_0)$, for which the equality $\mu_1(\tb_1, \tb_2, \ldots, \tb_k, t) = (\xb_1, \xb_2, \ldots, \xb_k, q_1(\mu_1)q_2(\mu_1)\ldots q_k(\mu_1))$ holds.

The theorem is completely proven.
\end{proof}

In the work~\cite{Gug19} of the second author, the proof of Theorem~\ref{thmReal} was omitted due to its simplicity. However, we still give here the proof of the most nontrivial part, namely, the non-degeneracy (local diffeomorphism) of the corresponding branched covering at the points of the local homeomorphism (away from the branching locus).

\begin{proof}
It is easily shown (similar to the above reasoning) that the canonical projection onto the orbit space is given by a simpler formula:
\begin{multline*}
(x_{1,1}, \ldots, x_{m_1,1}, q_1, x_{1,2},\ldots, x_{m_2,2}, q_2, \ldots, x_{1,k},\ldots, x_{m_k,k}, q_k) \mapsto \\
  \mapsto (x_{1,1}, \ldots, x_{m_1,1}, x_{1,2},\ldots, x_{m_2,2}, \ldots, x_{1,k},\ldots, x_{m_k,k}, q_1q_2\cdots q_k).
\end{multline*}
Here, we denote $x_{m_i+1,i} = q_i\in \Ro$, $1\leqslant i\leqslant k$ for the sake of simplicity. Moreover, for the initial map onto the unit sphere, the points of the local homeomorphism are either $(A)$ points with $q_1q_2\cdots q_k \neq 0$, or $(B)$ there is a unique $1\leqslant j \leqslant k$ with $q_j=0$.

It is understood that the points of a local diffeomorphism for the original map of smooth $m$-dimensional manifolds $S^{m_1}\times \ldots \times S^{m_k} \to S^m$ correspond (in both directions) to the points of a local diffeomorphism for the following map of smooth $(m+1)$-dimensional manifolds $F\colon S^{m_1}\times \ldots \times S^{m_k}\times (0,+\infty) \to \Ro^m\setminus\{0\}$:
\begin{gather*}
F(x_{1,1}, \ldots, x_{m_1,1}, q_1, x_{1,2},\ldots, x_{m_2,2}, q_2, \ldots, x_{1,k},\ldots, x_{m_k,k}, q_k;\mu) =\\
\mu(x_{1,1}, \ldots, x_{m_1,1}, x_{1,2},\ldots, x_{m_2,2}, \ldots, x_{1,k},\ldots, x_{m_k,k}, q_1q_2\cdots q_k),
\end{gather*}
where the parameter $\mu$ can be taken arbitrarily. Let us verify that in both cases $(A)$ and $(B)$ the Jacobian of the map $F$ is nonzero.

\textbf{Case $(A)$.} In this case, the string $(x_{1,1}, \ldots, x_{m_1,1}, x_{1,2},\ldots, x_{m_2,2}, \ldots, x_{1,k},\ldots, x_{m_k,k};\mu)$ can be taken as the local coordinates in the preimage. We need to calculate the determinant of order $m+1$.

The last column of the desired determinant ($\dd F(\ldots)/\dd \mu$) is equal to (the transposed string)
\[
(x_{1,1}, \ldots, x_{m_1,1}, \ldots, x_{1,k},\ldots, x_{m_k,k}, q_1q_2\cdots q_k)^\intercal.
\]
Further, the column $\dd F(\ldots)/\dd x_{1,1}$ divided by $\mu$ equals
\[
(1,0,\ldots,0, (\dd q_1/\dd x_{1,1}) q_2q_3\cdots q_k)^\intercal = \left(1,0,\ldots,0,  -\frac{x_{1,1}}{q_1} q_2q_3\cdots q_k \right)^\intercal.
\]
Similarly, the column $\dd F(\ldots)/\dd x_{2,1}$ divided by $\mu$ equals
\[
(0,1,0, \ldots,0, (\dd q_1/\dd x_{2,1}) q_2q_3\cdots q_k)^\intercal = \left(0,1,0,\ldots,0,  -\frac{x_{2,1}}{q_1} q_2q_3\cdots q_k \right)^\intercal.
\]
Making further calculations, we get that the penultimate column $\dd F(\ldots)/\dd x_{m_k,k}$ divided by $\mu$ equals
\[
(0,0, \ldots,0,1, (\dd q_k/\dd x_{m_k,k}) q_1q_2\cdots q_{k-1})^\intercal = \left(0,0,\ldots,0,1,-\frac{x_{m_k,k}}{q_k} q_1q_2\cdots q_{k-1}\right)^\intercal.
\]
Subtracting from the last column $(x_{1,1}, \ldots, x_{m_1,1}, \ldots, x_{1,k},\ldots, x_{m_k,k}, q_1q_2\cdots q_k)$ the first column multiplied by $x_{1,1}$, the second column multiplied by $x_{2,1}$, etc, we obtain, as a result, the lower triangular matrix with the diagonal
\[
\left(1,1,\ldots, 1, q + \frac{x_{1,1}^2 + x_{2,1}^2 + \ldots + x_{m_1,1}^2}{q_1^2}q +  \ldots + \frac{x_{1,k}^2 + x_{2,k}^2 + \ldots + x_{m_k,k}^2}{q_k^2}q\right),
\]
where $q=q_1q_2\cdots q_k$. Since $q\neq 0$, the determinant of this matrix is nonzero. The required local diffeomorphism in the case $(A)$ is proved.

\textbf{Case $(B)$.} Due to certain symmetry of the function $F$ in its arguments, we can assume without loss of generality that $q_1=0$, $q_2q_3\cdots q_k \neq 0$, $x_{1,1} \neq 0$. In this situation, one can choose the string
\[
q_1, x_{2,1}, x_{3,1}, \ldots, x_{m_1,1}, x_{1,2}, \ldots, x_{m_2,2}, \ldots, x_{1,k}, \ldots, x_{m_k,k}, \mu
\]
as local coordinates in the preimage. Recall the definition of $F$:
\[
F(\ldots) = \mu(x_{i,j}; q).
\]
Let us write down the calculations of all first partial derivatives of the function $F$:
\begin{gather*}
  \frac{1}{\mu}\frac{\dd F}{\dd q_1} = (0, 0, \ldots, 0; q_2q_3\cdots q_k), \\
  \frac{\dd F}{\dd \mu} = (x_{i,j};0), \\
  \frac{1}{\mu}\frac{\dd F}{\dd {x_{2,1}}} = \left(-\frac{x_{2,1}}{x_{1,1}}, 1, 0,\ldots, 0; 0\right), \  \frac{1}{\mu}\frac{\dd F}{\dd {x_{3,1}}} = \left(-\frac{x_{3,1}}{x_{1,1}}, 0, 1,0, \ldots, 0; 0\right),  \ldots , \\
  \frac{1}{\mu}\frac{\dd F}{\dd {x_{m_1,1}}} = \left(-\frac{x_{m_1,1}}{x_{1,1}}, 0, 0,\ldots, 0, 1, 0, \ldots, 0; 0\right), \\
  \frac{1}{\mu}\frac{\dd F}{\dd x_{1,2}} = (0, \ldots, 0,1,0, \ldots, 0; 0), \ \ldots \ ,   \frac{1}{\mu}\frac{\dd F}{\dd x_{m_k,k}} = (0, \ldots,0, 1;0).
\end{gather*}

By carefully computing the Jacobian of the map $F$ at the given point, we can see that it is nonzero if and only if the following determinant of order $m_1$ is nonzero:
\[
\begin{vmatrix}
x_{1,1}& -x_{2,1}  & -x_{3,1} & \ldots & -x_{m_1-1,1} & -x_{m_1,1}\\
x_{2,1}& x_{1,1} & 0 & \ldots & 0 & 0\\
x_{3,1}& 0 & x_{1,1} &  \ldots & 0 & 0 \\
\vdots& \vdots & \vdots &\ddots  & \vdots  & \vdots\\
x_{m_1-1,1} & 0  & 0 & \ldots & x_{1,1}  & 0\\
x_{m_1,1}& 0 & 0 &\ldots & 0 &  x_{1,1}
\end{vmatrix}
\]
Denote by $A$ the matrix under this determinant.

If this determinant is zero, then the skew-Hermitian matrix $A-x_{1,1}E$ has a nonzero real eigenvalue $\lambda = -x_{1,1}$. However, it is well known, that all eigenvalues of a skew-Hermitian matrix are purely imaginary complex numbers, and it was assumed earlier that $x_1\neq 0$. This contradiction shows that the Jacobian under consideration is nonzero. The desired local diffeomorphism in the case of $(B)$ is completely proved.
\end{proof}

\begin{rem}\label{remSubmersion}
Similar to the proof above one can show that the maps~\eqref{eqStar} and~\eqref{eqTwoStars} to the orbit space from Theorem~\ref{thmComplex} (complex tori) and Theorem~\ref{thmQuaternionic} (quaternionic tori) are smooth and they are submersions outside the degeneration locus. This means that in the open set of free orbits, the differentials of these maps have maximal possible rank equal to the dimension of the orbit space.
\end{rem}

In the work~\cite{Gug23b} of the second author, it was shown that the number $k-1$ of commuting involutions on the product of spheres $S^{m_1}\times \ldots \times S^{m_k}$ is the minimal possible if one wants to obtain a rational homological sphere as the orbit space. We pose the following problem related to this fact.

\begin{probl}
Is it true that, for an arbitrary smooth action of the complex torus $T^{k-2}$ on the product $S^{m_1}\times \ldots \times S^{m_k}$ of spheres of dimensions $\geqslant2$, the corresponding orbit space is not a (rational homological) sphere? Is it true that for an arbitrary smooth action of the quaternionic torus $\Sp(1)^{k-2}$ on the product $S^{m_1}\times \cdots \times S^{m_k}$ of spheres of dimensions $\geqslant 4$ the corresponding orbit space is not a (rational homology) sphere?
\end{probl}

This conjecture seems hard. It is nontrivial even in the simplest case $k=3$.

\section{General groups}\label{secGeneralJoins}

Consider nonempty compact Hausdorff spaces $X_1,X_2,\ldots, X_k$, $k\geqslant 2$, and an arbitrary compact Hausdorff group $G$. Consider the following (left) continuous action of the group $G^{k-1}$ on the product of joins $\prod_{i=1}^{k}(X_i\ast G)$.
Namely, the element $(r_1,r_2,\ldots, r_{k-1})\in G^{k-1}$ translates a point
\[
(x_1,1-t_1, q_1,t_1; x_2,1-t_2, q_2,t_2; \ldots; x_k,1-t_k, q_k,t_k)
\]
to the point
\begin{equation}\label{eqCodiagonalAcn}
(x_1,1-t_1, q_1r_1^{-1},t_1; x_2,1-t_2, r_1q_2r_2^{-1},t_2; \ldots; x_k,1-t_k, r_{k-1}q_k,t_k).
\end{equation}

\begin{thm}\label{thmJoins}
The orbit space $\prod_{i=1}^{k}(X_i\ast G)/G^{k-1}$ is homeomorphic to the join
$X_1\ast X_2\ast \ldots\ast X_k\ast G$. The canonical projection onto the space of orbits is given by the formula:
\begin{multline}\label{eqJoinMap}
(x_1,1-t_1, q_1,t_1; x_2,1-t_2, q_2,t_2; \ldots; x_k,1-t_k, q_k,t_k) \mapsto \\
\mapsto (x_1,1-t_1;x_2,1-t_2;\ldots; x_k,1-t_k; q_1q_2\cdots q_k, t_1t_2\cdots t_k)/A,
\end{multline}
where $A=t_1\cdots t_k+\sum\nolimits_{i=1}^{k}(1-t_i)$ is the normalizing factor.
\end{thm}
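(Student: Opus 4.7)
The plan is to adapt the four-step argument used for Theorem~\ref{thmQuaternionic}: show that the map~\eqref{eqJoinMap} is well-defined on the product of joins, that it descends to the orbit space, that the induced map is a continuous bijection onto the iterated join, and then invoke the ``continuous bijection from compact to Hausdorff is a homeomorphism'' criterion (using~\cite{Bredon-ru} for Hausdorffness of the orbit space). A preparatory remark is that the normalizing factor $A=t_1\cdots t_k+\sum_{i=1}^{k}(1-t_i)$ satisfies $A\geqslant 1$ on $[0,1]^k$, since if some $t_i$ vanishes then the sum is at least $1$, while otherwise the product term is strictly positive. The weights $(1-t_i)/A$ and $(t_1\cdots t_k)/A$ are therefore nonnegative and sum to $1$, so the image lands in the join $X_1\ast\ldots\ast X_k\ast G$. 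The formula also respects the join identifications on both sides: at $t_i=0$ the value of $q_i$ is collapsed in $X_i\ast G$ and the $q_1\cdots q_k$-slot on the right has weight $0$, while at $t_i=1$ the $x_i$-coordinate is collapsed on both sides. Descent to the orbit space is immediate, since under~\eqref{eqCodiagonalAcn} the product $q_1q_2\cdots q_k$ telescopes to itself, while $x_i$ and $t_i$ are untouched.

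For injectivity on orbits I would compare the weights of two preimages with the same image and obtain a positive scalar $\lambda$ with $1-t_i=\lambda(1-t_i')$ for every $i$ and $t_1\cdots t_k=\lambda t_1'\cdots t_k'$. A short case analysis on which $t_i$'s hit the endpoints $0$ and $1$ rules out $\lambda\neq 1$ (if $\lambda>1$, then either some $t_j'=0$ forces $t_j<0$, or, on the set where $t_i'<1$, the strict inequalities $t_i<t_i'$ contradict $\prod t_i=\lambda\prod t_i'$), so $t_i=t_i'$ throughout. Equality of the remaining coordinates then reduces the claim to solving the system $q_1r_1^{-1}=q_1'$, $r_{i-1}q_ir_i^{-1}=q_i'$ for $2\leqslant i\leqslant k-1$, and $r_{k-1}q_k=q_k'$, for some $(r_1,\ldots,r_{k-1})\in G^{k-1}$. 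When every $t_i$ is positive, a direct left-to-right recursion produces the $r_j$'s and closes consistently because the hypothesis $t_1\cdots t_k>0$ forces $q_1\cdots q_k=q_1'\cdots q_k'$. When some $t_j$ vanish, the corresponding $q_j,q_j'$ are free in $X_j\ast G$, so the system decouples into independent blocks indexed by the maximal runs between consecutive zeros, and each block is solved by the same recursion. This mirrors the extreme-case analysis at the end of the proof of Theorem~\ref{thmQuaternionic}.

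For surjectivity, given a target point $(x_1,s_1;\ldots;x_k,s_k;q,s)$ with $s_1+\cdots+s_k+s=1$, I would look for $t_i\in[0,1]$ and a number $A>0$ satisfying $1-t_i=s_iA$ together with $\prod_{i=1}^{k}(1-s_iA)=sA$. The auxiliary function $f(A)=\prod_{i=1}^{k}(1-s_iA)-sA$ has $f(0)=1$ and becomes nonpositive by the time $A$ reaches $(\max_is_i)^{-1}$, so the intermediate value theorem delivers a root $A^{\ast}>0$. Setting $t_i=1-s_iA^{\ast}$ and choosing any $q_i\in G$ with $q_1\cdots q_k=q$ (or arbitrary ones when $t_1\cdots t_k=0$, in which case the $q$-slot carries weight $0$) produces a preimage. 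The main technical obstacle I expect is the injectivity argument on the degenerate stratum where several $t_i$'s vanish simultaneously: keeping track of which $r_j$'s are free and which are constrained, and organizing the block recursion accordingly, requires the same delicate case analysis that appears in Theorem~\ref{thmQuaternionic}. The remaining steps are essentially formal once the positivity of $A$ and the compatibility with the join identifications are in hand.
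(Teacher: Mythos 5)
Your proposal is correct and follows essentially the same route as the paper: define the explicit map, check it respects the join identifications and the $G^{k-1}$-action, prove bijectivity of the induced map, and conclude by the continuous-bijection-from-compact-to-Hausdorff criterion. The only difference is one of detail, not of method: where the paper appeals to the cube-to-simplex homeomorphism, to $G^k/G^{k-1}\cong G$, and to the collapse of $G^k$ on the stratum $\{t_1\cdots t_k=0\}$ as ``easily seen'' facts, you supply the explicit scaling argument, the telescoping recursion for the $r_j$'s, the block decomposition on degenerate strata, and the intermediate value theorem argument for surjectivity.
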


\begin{proof}
Recall that the join of spaces $Y_1,\ldots,Y_m$ is the identification space
\[
Y_1\times\cdots\times Y_s\times \Delta^{m-1}/\simc,
\]
where $\Delta^{m-1}$ is the standard simplex with barycentric coordinates $(s_1,\ldots,s_m)$, $s_i\geqslant 0$, $\sum s_i=1$, and the equivalence relation $\sim$ is generated by the conditions
\[
(y_1,\ldots,y_l,\ldots,y_m,(s_1,\ldots,s_m))\sim (y_1,\ldots,y_l',\ldots,y_m,(s_1,\ldots,s_m)), \text{ if }s_l=0
\]
for any $l\in[m]$. Notice that $t_i\in[0;1]$ in~\eqref{eqJoinMap}, therefore $A>0$. Therefore, all coefficients $\frac{1-t_i}{A}$, $i\in[k]$ and $\frac{t_1\cdots t_k}{A}$ are nonnegative and sum to $1$ due to the choice of the normalizing factor $A$. Hence formula~\eqref{eqJoinMap} provides a well-defined continuous map of the form
\[
h\colon\prod\nolimits_{i=1}^{k}(X_i\times \Delta^1\times G)\to X_1\times\cdots\times X_k\times G\times \Delta^{k-1}.
\]
It is easily seen that the map $h$ is surjective. Let us check that $h$ descends to a well-defined map of the quotient spaces, the joins from the statement of the theorem. If $t_i=1$ for some $i\in[k]$, then the corresponding factor $X_i$ collapses to one point both in the space $X_i\ast G$ and in the space $X_i\ast\cdots \ast X_k\ast G$, so equivalent points of this kind are mapped to equivalent points. If $t_i=0$, then $t_1\cdots t_k=0$, and, similarly, equivalent points are mapped to equivalent ones. Therefore, the formula~\eqref{eqJoinMap} descends to a well-defined continuous map
\[
\hh\colon (X_1\ast G)\times (X_2\ast G)\times \ldots \times (X_k\ast G)/G^{k-1} \to X_1\ast X_2\ast \ldots\ast X_k\ast G.
\]
Since all the spaces appearing in this formula are Hausdorff compact, it suffices to show that the map $\hh$ is bijective. The surjectivity of $\hh$ follows from the surjectivity of~$h$.

Let us prove that $\hh$ is injective. Generally, the proof is similar to the proof of Theorem~\ref{thmQuaternionic}. Assume that
$\hh(x_1,t_1,q_1,\ldots,x_k,t_k,q_k) = \hh(x_1',t_1', q_1',\ldots, x_k',t_k',q_k')$. Looking at the real parameters $t_1,\ldots,t_k$ we see that formula~\eqref{eqJoinMap} defines a homeomorphism of the $k$-dimensional cube onto the $k$-dimensional simplex. Therefore the equalities $t_i=t_i'$ hold for all $i\in[k]$.

If $0<t_i<1$ for all $i$, then the assertion of injectivity reduces to the homeomorphism $G^k/G^{k-1}\cong G$ given by the formula $[(q_1,\ldots,q_k)]\mapsto q_1\cdots q_k$. If $t_i=1$ for some $i$, then the fiber $X_i$ collapses on both sides of the map. If $t_1\cdots t_k=0$, then $t_i=0$ for some $i\in[k]$. This means that the $i$-th component $G_i$ of the product $G^k$ collapses in the target of $\hh$. Taking the quotient of $G^k$ simultaneously by $G_i$ and the action of the subgroup $G^{k-1}$ described by the formula~\eqref{eqCodiagonalAcn}, we see that the entire group $G^k$ collapses into a point. Therefore, the identifications on the face of the simplex $\{t_1\cdots t_k=0\}$ are the same in the space $X_1\ast X_2\ast \ldots\ast X_k\ast G$ and in the space $(X_1\ast G)\times (X_2\ast G)\times \ldots \times (X_k\ast G)/G^{k-1}$. Injectivity is proved.
\end{proof}

Note that the extreme trivial cases of Theorem~\ref{thmJoins} appear to be informative.

\begin{ex}\label{exTrivialGroup}
Let us apply Theorem~\ref{thmJoins} to the trivial group $G=\{1\}$. We get the standard topological fact:
\[
\Cone X_1\times \Cone X_2\times \cdots \times \Cone X_k\cong \Cone(X_1\ast X_2\ast \ldots\ast X_k).
\]
\end{ex}

\begin{ex}\label{exTrivialSpace}
Let us apply Theorem~\ref{thmJoins} to the one-point topological spaces $X_i=\ast$, $i\in[k]$. We get a homeomorphism:
\[
(\Cone G)^{k}/G^{k-1}\cong\underbrace{\Cone\cdots\Cone}_k G\cong \Delta^{k-1}\ast G.
\]
\end{ex}

\begin{rem}\label{remParticCases}
The topological part of Theorems~\ref{thmReal}, \ref{thmComplex} and \ref{thmQuaternionic} is a special case of Theorem~\ref{thmJoins} if we let $G$ be a torus: real $\Zt=O(1)$, complex $T^1\cong U(1)$, or quaternionic $\Sp(1)$, respectively; and $X_i$ --- spheres of arbitrary dimensions.

Note, however, that the above theorems contain stronger assertions. Since both the spaces $X_i$ and the group $G$ are spheres, their joins are also spheres, and therefore have a natural smooth structure. The question of whether the natural projection onto the orbit space is smooth seems important. For this reason we spent some time proving smoothness in Section~\ref{secMainResults}.
\end{rem}

Example~\ref{exTrivialSpace} has a useful treatment in the real, complex, and quaternionic cases. Let $\Ko$ denote $\Ro$, $\Co$, or $\Ho$, the number $d(\Ko)$ be equal to $1$, $2$, or $4$, respectively, and $S(\Ko)$ denote the compact Lie group of numbers having norm $1$ in the corresponding division algebra. Thus $S(\Ko)$ is $\Zt$, $U(1)$, or $\Sp(1)$. Topologically, $S(\Ko)$ is a sphere of dimension $d(\Ko)-1$.

The group $S(\Ko)^{k-1}$ acts linearly on $\Ko^k\cong \Ro^{d(\Ko)k}$ by the formula
\begin{equation}\label{eqCodiagOnEuclid}
(g_1,\ldots,g_{k-1})(q_1,\ldots,q_k)=(q_1g_1^{-1},g_1q_2g_2^{-1},\ldots,g_{k-2}q_{k-1}g_{k-1}^{-1},g_{k}q_{k}).
\end{equation}

\begin{prop}\label{propLocal}
The orbit space $\Ko^k/S(\Ko)^{k-1}$ is homeomorphic to the space $\Ro^{d(\Ko)+k-1}$.
\end{prop}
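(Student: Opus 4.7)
The plan is to derive Proposition~\ref{propLocal} from Example~\ref{exTrivialSpace} by restricting the orbit map on a closed polydisk to its interior. First, I would identify $\Cone S(\Ko)$ with the closed unit disk $D(\Ko)=\{q\in\Ko\mid |q|\leqslant 1\}$ via $(q,t)\mapsto tq$. Since left and right multiplication by elements of $S(\Ko)$ preserve the norm on $\Ko$, this identification is $S(\Ko)$-equivariant, so the $S(\Ko)^{k-1}$-action from~\eqref{eqCodiagonalAcn} on $(\Cone S(\Ko))^k$ corresponds exactly to the restriction of~\eqref{eqCodiagOnEuclid} to the closed polydisk $D(\Ko)^k\subset\Ko^k$.

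Example~\ref{exTrivialSpace} then provides a homeomorphism $(\Cone S(\Ko))^k/S(\Ko)^{k-1}\cong \Delta^{k-1}\ast S(\Ko)$. Iterating the standard identities $\Cone X\ast Y\cong \Cone(X\ast Y)$ and $S^{a}\ast S^{b}\cong S^{a+b+1}$ yields $\Delta^{k-1}\ast S^{d(\Ko)-1}\cong D^{d(\Ko)+k-1}$, a closed topological disk of the expected dimension.

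The central step is to verify that the orbit map~\eqref{eqJoinMap} (with trivial $X_i$) sends the topological boundary of the polydisk $\{\max_i|q_i|=1\}$ precisely onto the topological boundary $S^{d(\Ko)+k-2}$ of the target disk. By~\eqref{eqJoinMap}, the polydisk boundary is the locus where some $t_i=1$; in the join coordinates this is the vanishing locus of some barycentric coordinate $(1-t_i)/A$, i.e.\ the subspace $\partial\Delta^{k-1}\ast S(\Ko)\cong S^{d(\Ko)+k-2}$, which is the topological boundary of $D^{d(\Ko)+k-1}$. Conversely, at interior polydisk points all $t_i<1$, so all $(1-t_i)/A>0$ and the image lies in the interior of the target disk. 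Since the $S(\Ko)^{k-1}$-action preserves each $|q_i|$, it preserves the polydisk boundary, and the orbit-space homeomorphism restricts to a homeomorphism of interiors.

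To finish, the radial rescaling $q\mapsto q/(1+|q|)$ is a homeomorphism between $\Ko$ and the interior of $D(\Ko)$ that commutes with left and right multiplication by elements of $S(\Ko)$; its $k$-fold product is therefore $S(\Ko)^{k-1}$-equivariant and descends to $\Ko^k/S(\Ko)^{k-1}\cong\Ro^{d(\Ko)+k-1}$. The main technical point is the boundary-to-boundary correspondence in the third paragraph; everything else is routine assembly from Example~\ref{exTrivialSpace} and the join calculus.
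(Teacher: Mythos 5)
Your proposal is correct and takes essentially the same route as the paper's first proof of Proposition~\ref{propLocal}: identify $\Ko$ equivariantly with the interior of $\Cone S(\Ko)$, apply Example~\ref{exTrivialSpace}, and pass to interiors. Your third paragraph (the boundary-to-boundary correspondence $\{\exists i:\, t_i=1\}\leftrightarrow \partial\Delta^{k-1}\ast S(\Ko)$) is exactly the verification the paper compresses into the phrase ``pass to the interiors of the spaces,'' so you have simply made the paper's terse argument explicit.
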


\begin{proof}
\textbf{First version of the proof.} The space $\Ko$ is equi\-va\-ri\-ant\-ly diffeomorphic to the open unit ball in $\Ko$ that is the interior of the cone $\Cone S(\Ko)$. Apply the homeomorphism from Example~\ref{exTrivialSpace} and pass to the interiors of the spaces.

\textbf{Second version of the proof.} Consider the coordinate-wise action of $S(\Ko)^k$ on $\Ko^k$. We have $\Ko^k/S(\Ko)^k\cong \Rg^k$. The projection to the quotient space has a natural section, so the space $\Ko^k$ can be represented as the identification space
\begin{equation}\label{eqQuotConstr}
\Ko^k\cong \Rg^k\times S(\Ko)^k/\simc,
\end{equation}
similarly to how quasitoric manifolds are defined in the toric topology~\cite{BPnew}. Moding out the second factor in the construction~\eqref{eqQuotConstr} by the torus action~\eqref{eqCodiagOnEuclid}, we obtain
\begin{equation}\label{eqQuotReduced}
\Ko^k/S(\Ko)^{k-1}\cong \Rg^k\times S(\Ko)/\simc.
\end{equation}
Note that $\Rg^k$ is homeomorphic to the half-space $\Ro^{k-1}\times \Rg$. A careful analysis of the stabilizers shows that the relation $\sim$ in the formula~\eqref{eqQuotReduced} collapses the component $S(\Ko)$ into a point if and only if the corresponding point from $\Ro^{k-1}\times \Rg$ belongs to the boundary $\Ro^{k-1}\times\{0\}$, see~\cite{AyzCompl} for details. Hence we get
\[
\Ko^k/S(\Ko)^{k-1}\cong \Ro^{k-1}\times (\Rg\times S(\Ko)/\simc)\cong \Ro^{k-1}\times\Ko
\]
which completes the proof.

\end{proof}

\begin{rem}\label{remComplexAndReal}
The second version of the proof in the general case is completely analogous to the complex case considered in~\cite[Lem.2.11]{AyzCompl}, see also~\cite[Thm.3.6]{Styrt}. The real case was studied in detail by Mikhailova~\cite[Thm.2.2]{Mikh}.
\end{rem}


Proposition~\ref{propLocal} can be understood as a local result. The global consequence follows.

\begin{cor}\label{corOrbitGlobal}
Consider a smooth action of a torus $G$ (real, complex, or quaternionic) on a closed smooth manifold $X$. Assume that the linearized action on the normal space to each orbit is equivalent to a representation of the form~\eqref{eqCodiagOnEuclid} multiplied by a trivial representation. Then the orbit space $X/G$ is a topological manifold.
\end{cor}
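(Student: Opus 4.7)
The plan is to deduce the global manifold structure on $X/G$ from the local model established in Proposition~\ref{propLocal} by invoking the differentiable slice theorem. Since $G$ is a compact Lie group acting smoothly on the closed manifold $X$, for every point $x\in X$ the slice theorem provides a $G$-invariant open tubular neighborhood of the orbit $Gx$ equivariantly diffeomorphic to the associated bundle $G\times_{G_x}N_x$, where $G_x$ is the isotropy subgroup and $N_x$ is the normal space to the orbit, on which $G_x$ acts linearly. Passing to the $G$-quotient collapses the homogeneous factor, so this neighborhood projects onto an open neighborhood of the image of $Gx$ in $X/G$ that is canonically homeomorphic to $N_x/G_x$.

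The hypothesis of the corollary identifies this slice representation up to isomorphism: $N_x$ splits as $\Ko^k\oplus\Ro^d$, with $G_x\cong S(\Ko)^{k-1}$ acting on $\Ko^k$ via the codiagonal representation~\eqref{eqCodiagOnEuclid} and trivially on the complementary $\Ro^d$. The trivial factor splits off the quotient, and then Proposition~\ref{propLocal} identifies the remaining piece as a Euclidean space, yielding
\[
N_x/G_x\;\cong\;\bigl(\Ko^k/S(\Ko)^{k-1}\bigr)\times\Ro^d\;\cong\;\Ro^{d(\Ko)+k-1}\times\Ro^d.
\]
Thus every point of $X/G$ admits an open neighborhood homeomorphic to a Euclidean space, whose dimension depends only on the combinatorial type of the slice representation at that orbit.

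To finish, one notes that $X/G$ is Hausdorff (the quotient of a Hausdorff space by a continuous action of a compact Lie group is Hausdorff, cf.~\cite{Bredon-ru}) and second countable (as a continuous image of the compact manifold $X$), so the local Euclidean charts assemble into a topological manifold structure. The substantive content of the argument is the local identification in Proposition~\ref{propLocal}; granted that, the corollary is a formal consequence of the slice theorem. The only point requiring any care is the observation that the trivial summand of the isotropy representation splits off the quotient, which is immediate from linearity, so there is essentially no obstacle once the local model is in hand.
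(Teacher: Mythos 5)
Your proof is correct and is exactly the argument the paper intends: the paper states this corollary without proof, as the ``global consequence'' of Proposition~\ref{propLocal}, and your use of the differentiable slice theorem to identify a neighborhood of an orbit's image in $X/G$ with $N_x/G_x\cong\bigl(\Ko^k/S(\Ko)^{k-1}\bigr)\times\Ro^d$ is precisely the missing link. The only cosmetic imprecision is your justification of second countability (a continuous image of a second countable space need not be second countable in general; rather, $X/G$ is compact Hausdorff and the continuous image of a compact metrizable space, hence metrizable), but this does not affect the argument.
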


In the case of a complex torus, many examples of such actions with isolated fixed points were studied in the works of the first author~\cite{AyzCompl,AyzHP,AyzMasEquiv}. Actions of real tori whose orbit spaces are manifolds were studied by Gorchakov~\cite{Gorch}.

There is also a series of results worth mentioning in this context:
\begin{equation}\label{eqAtiyahArnold}
\CP^2/\conj\cong S^4,\qquad \HP^2/U(1)\cong S^7,\qquad \OP^2/\Sp(1)\cong S^{13}.
\end{equation}
Here the first homeomorphism is the classical Kuiper--Massey theorem (Arnold~\cite{Arn} attributes this result to Pontryagin), the second homeomorphism $\HP^2/U(1)\cong S^7 $ is the result of Arnold himself~\cite[Ex.4]{Arn}, and the third one is due to Atiyah--Berndt~\cite{AtBer}. In these examples, the set of fixed points is not discrete, but the linearization of the group action on a normal space to the fixed points' submanifold is equivalent to the linear representations of $\Zt$ on $\Ro^2$, $U(1)$ on $\Co^2=\Ro^4$, and $\Sp(1)$ on $\Ho^2 =\Ro^8$ respectively.

Corollary~\ref{corOrbitGlobal} explains why the orbit spaces in all cases~\eqref{eqAtiyahArnold} are manifolds; although, by no means, it explains why the orbit spaces are homeomorphic to spheres. In view of Theorems~\ref{thmReal}, \ref{thmComplex}, \ref{thmQuaternionic} and homeomorphisms~\ref{eqAtiyahArnold}, arises a natural question.

\begin{probl}
Describe a class of actions of the groups $\Zt^k$, $T^k$, and $\Sp(1)^k$ on smooth manifolds with orbit spaces homeomorphic to spheres which is general enough to include both the products of spheres and the manifolds $\CP^2$, $\HP^2$, and $\OP^2$.
\end{probl}

%

\end{document}